\documentclass{amsart}
 
\usepackage{colordvi,multicol,color}
\usepackage{amssymb}
\usepackage[hyperindex,bookmarksnumbered,plainpages]{hyperref}

\usepackage{enumerate}
\usepackage{multicol}
\newtheorem{theorem}{Theorem}[section]
\newtheorem{lemma}[theorem]{Lemma}
\newtheorem{corollary}[theorem]{Corollary}
\theoremstyle{definition}
\newtheorem{definition}[theorem]{Definition}

\newtheorem{proposition}[theorem]{Proposition}

\newtheorem{remark}[theorem]{Remark}

\numberwithin{equation}{section}


\DeclareMathOperator{\ad}{ad}
\DeclareMathOperator{\field}{\mathbb{F}}
\DeclareMathOperator{\g}{\mathfrak{g}}

\DeclareMathOperator{\vspan}{span}


\begin{document}

\title{Lie $2$-algebras of toral rank $3$}

\author[Benitez]{Germ\'an Benitez}
\address{Universidade Federal do Paran\'a \\ Pontal do Paran\'a \\ Brazil \& Departamento de Matem\'atica \\ Universidade Federal do Amazonas \\ Manaus \\ Brazil}
\email{gabm@ufam.edu.br - gabm03@gmail.com}

\author[Payares Guevara]{Carlos R. Payares Guevara}
\address{Dirección de Ciencias B\'asicas \\ Universidad Tecnol\'ogica de Bol\'ivar \\ Cartagena de Indias \\ Colombia \&  Departamento de Matem\'atica \\ Universidade Federal do Amazonas \\ Manaus \\ Brazil}
\email{cpayares@utb.edu.co}

\author[Quintero Vanegas]{Elkin O. Quintero Vanegas}
\address{Departamento de Matem\'atica \\ Universidade Federal do Amazonas \\ Manaus \\ Brazil}
\email{eoquinterov@ufam.edu.br}



\date{\today}



\begin{abstract}
In this paper we study Lie $2$-algebras over an algebraically closed field of characteristic two, which have a triangulable Cartan subalgebra, and derive some general properties of centerless ones. These properties allow us to do an analysis on simple Lie $2$-algebras of toral rank three and provide a necessary condition for simplicity. By means of this latter condition we also conclude that simple Lie $2$-algebras with a triangulable Cartan subalgebra of toral rank three and of  dimension less than or equal to $16$ cannot exist.
\end{abstract}

\subjclass[2010]{17B50, 17B20, 17B22}

\keywords{Restricted simple Lie algebra, toral rank, root space decomposition}

\maketitle




\section*{Introduction}

Simple Lie algebras over an algebraically closed field of characteristic zero were classified by W. Killing (1888) and E. Cartan (1894). They have found four infinite families $A_n, B_n, C_n, D_n$ and five exceptional cases $E_6, E_7, E_8, G_2$ and $F_4$. The study of finite-dimensional simple Lie algebras over algebraically closed fields of positive characteristic have been initiated by N. Jacobson, E. Witt and H. Zassenhaus in the 1930s. By use of Chevalley basis it is possible to choose the structure constants to be integers, and  by modulo $p$ reduction some modular Lie algebras over a field of characteristic $p$ are obtained. The resulting algebras are called classical Lie algebras and all of them are simple for any prime $p$ greater than five~\cite[Ch. 4]{Strad04}. In addition to classical simple Lie algebras there are also known four infinite families of non-classical simple Lie algebras. They are of graded Cartan type and fall into the following  four classes: Witt $\bf W$, Special $\bf S$, Hamiltonian $\bf H$ and Contact $\bf K$ (see~\cite[Ch. 4.2]{Strad04}). Surprisingly, the characteristic five case is known to accommodate a special type of simple Lie algebras that only appear in this case, namely the Melikian type algebras~\cite{Mel}.

To achieve the classification of the finite-dimensional simple Lie algebras over an algebraically closed field of characteristic $p>3$ it proved useful to first classify  a subclass of simple Lie algebras known as restricted Lie algebras or Lie $p$-algebras introduced by N. Jacobson \cite[p. 187]{ja} for arbitrary characteristic $p$. More precisely, A. Kostrikin and I. Shafarevich conjectured in 1966 \cite{KosShaf66} that: 

\vspace{.2cm}

\emph{Every finite-dimensional simple Lie $p$-algebra over a field of characteristic greater than five is either of  classical type or of Cartan type.} 

\vspace{.2cm}

R. Wilson in \cite{Wil77} proved that all Cartan subalgebras $\mathfrak{h}$ of a finite-dimension simple Lie algebra $\mathfrak{g}$ over an algebraically closed field of characteristic $p>7$ are triangulable, i.e., $[\mathfrak{h}, \mathfrak{h}]$ acts nilpotently on $\mathfrak{g}$. This deep result was the starting point of the classification problem for simple Lie algebras (non necessarily restricted), and allowed to R. Block and R. Wilson to prove in \cite{BlWil88} the latter conjecture for $p>7$. Later, A. Premet in \cite{Pre94} extended the Wilson's triangulable  theorem to the case where $p=7$ and pointed out that it does not hold in characteristic $5$. 

	Several results in \cite{BlWil88} were extended for $p>3$ by A. Premet and H. Strade. Thus, considering also the Melikian family, they completed the classification of simple Lie $p$-algebras for $p>3$. In summary, over an algebraically closed field of characteristic greater than three the following statement holds (see \cite[pp. 180 ff.]{Strad04}):

\vspace{.2cm}

\emph{Every finite-dimensional simple Lie $p$-algebra is of either classical type, Cartan type or Melikian type.} 

\vspace{.2cm}

It deserves to be noticed that in the characteristic five case the only simple Lie $p$-algebra of Melikian type is of dimension $125$.

V. Kac generalized the Kostrikin--Shafarevich conjecture for simple Lie algebras \cite{Kac71,Kac74}. The proof  took few decades and the efforts of few mathematicians, and was brought to completion in series of papers  by H. Strade and A. Premet~\cite{PreStra97,PreStra99,PreStra01,PreStra04, PreStra07,PreStra08}. The Block--Wilson--Strade--Premet classification theorem is a landmark result of modern mathematics and it reads:

\vspace{.2cm}

\textbf{Theorem} (Block--Wilson--Strade--Premet). \emph{Every finite-dimensional simple Lie algebra over an algebraically closed field of characteristic greater than three is of either classical type, Cartan type or Melikian type.}

\vspace{.2cm}

The classification of finite-dimensional simple Lie algebras over a field of characteristic both two and three remains open. In contrast with the case of characteristic greater or equal to five, new phenomena in characteristic two and three are already known in the literature. For instance, the Lie algebras of Cartan type are not necessarily simple over characteristic two. The first steps towards the classification of simple Lie algebras in low characteristics were taken by S. Skryabin~\cite{Sk} by means of proving that every finite-dimensional Lie algebra over an algebraically closed field of characteristic two and absolute toral rank one is solvable, i.e., the absolute toral rank of a finite-dimensional simple Lie algebra is at least two.

In~\cite[p. 210]{PreStra06}, A. Premet and H. Strade presented the following problem: 

\vspace{.2cm}

\emph{Classify all the finite-dimensional simple Lie algebras of (absolute) toral rank two over an algebraically closed field of characteristic two and three.} 

\vspace{.2cm}

A. Grishkov and A. Premet have taken some steps in this direction (see \cite{GrishPrem} and \cite[Problem 1]{PreStra06}). They announced that all finite-dimensional simple Lie algebras over an algebraically closed field of characteristic two and absolute toral rank two are the classic ones. In particular, all finite-dimensional simple Lie $2$-algebras of (relative) toral rank two have dimension $8$, $14$ or $26$.

	To the best of our knowledge there are no results in the literature regarding simple Lie algebras with absolute toral rank greater than two. Thus, we are naturally interested in the following problem:

\vspace{.2cm}

\emph{Classify the finite-dimensional simple Lie algebras of absolute toral rank greater than two over an algebraically closed field of characteristic two.}

\vspace{.2cm}

	The latter problem is very general and at the time of writing we are unaware how to attack it. However, with the Kostrikin--Shafarevich conjecture and its resolution in mind, one can easily formulate a weaker problem that might  help understand better the structure of the Lie algebras of toral rank greater than two in characteristic two, namely:

\vspace{.2cm}

{\bf Problem:} \emph{Classify the finite-dimensional simple Lie $2$-algebras of (relative) toral rank greater than two.}

\vspace{.2cm}

	It is worth pointing out that, following the same spirit for $p>5$ and suggested by the simple Lie $2$-algebras  Cartan type $\textbf{W}(3,\textbf{1})$, $\textbf{K}(5,\textbf{1})$, $\textbf{S}(4,\textbf{1})$, $\textbf{H}(6,\textbf{1})$ and $\mathcal{H}(8,\textbf{1})$, we will consider that, all Lie $2$-algebras in this paper have a triangulable Cartan subalgebra. Note that for the algebras $\textbf{K}(5,\textbf{1})$ and $\mathcal{H}(8,\textbf{1})$, we refer the algebra introduced in~\cite{Zhang92} and the subalgebra of $\textbf{H}(8,\textbf{1})$ introduced in~\cite[p. 138]{Purs18}, respectively.

Very recently, C. Payares Guevara and F. Arias Amaya pondering on the weaker problem above proved that classical simple Lie $2$-algebras of odd (relative) toral rank do not exist \cite{PayFab19}. The upshot of our paper is that \emph{the $\mathfrak t$-roots spaces of a simple Lie $2$-algebra of toral rank three have the same dimension} (Theorem~\ref{Payarin}). The careful reader might have already observed that an immediate corollary of the aforementioned theorem is that simple Lie $2$-algebras $\mathfrak g$ of toral rank $3$ with $\dim(\mathfrak{g})\leq 16$ do not exist.

This paper is organized as follows. We begin with, in Section~\ref{sec:prel}, by discussing some preliminaries. In Section \ref{sec:MT}, we fix that all Lie $2$-algebras in this paper have a triangulable Cartan subalgebra and we prove, among other things, that any Lie $2$-algebra has at most seven $\mathfrak t$-roots when $\mathfrak t$ is a maximal torus of dimension three. Section~\ref{sec:centerless} is dedicated to centerless Lie $2$-algebras in the particular case when all seven $\mathfrak t$-roots appear and culminates in the fact that whenever there exist two $\mathfrak t$-roots spaces with different dimension it is possible to construct a proper ideal. Finally, we employ these results to obtain a necessary condition for a Lie $2$-algebra to be simple.


\section{Preliminaries}
\label{sec:prel}

In this paper $\mathbb{F}$ denotes an algebraically closed field of characteristic $2$ and all Lie algebras are assumed to be finite-dimensional over $\mathbb{F}$. We denote by $\ad_{\g}$ the adjoint representation of a Lie algebra $\g$ and by $\ad_{\g}^n(x)$ the composition $\left(\ad_{\g}(x)\right)^n$ for all $x\in \g$ and $n\in\mathbb{N}$. We shall be interested in \emph{Lie $2$-algebras} over $\field$.


\begin{definition} 
Let $\mathfrak{g}$ be a Lie algebra. A map $[2]: \mathfrak{g}\longrightarrow \mathfrak{g}$ such that $a\mapsto a^{[2]}$ is called \emph{$2$-map} if
\begin{enumerate}[(i)]
\item $(\lambda\,a)^{[2]}=\lambda^{2}\,a^{[2]}$, for all $\lambda\in \mathbb{F}$ and for all $a\in\mathfrak{g}$.
\item $\ad_{\mathfrak{g}}^{2}(a)=\ad_{\mathfrak{g}}\left(a^{[2]}\right)$, for all $a\in\mathfrak{g}$.
\item $(a+b)^{[2]}=a^{[2]}+b^{[2]}+[a,b]$, for all $a,b\in\mathfrak{g}$. 
\end{enumerate}
It is important to observe that not all Lie algebras admit a 2-map. For this reason, a Lie algebra with a 2-map is called  \emph{Lie $2$-algebra} and will be denoted by $(\mathfrak{g},[2])$. 
\end{definition}


	Albeit the definitions in this section are valid for arbitrary characteristic $p$, we are interested only in characteristic $p=2$. Notice also that a $2$-map on a centerless Lie $2$-algebra $\mathfrak{g}$ is unique. 

	Let $\mathfrak{g}$ be a Lie $2$-algebra. A \emph{$2$-subalgebra} is a subalgebra of $\mathfrak{g}$ which is closed under the $2$-map. A \emph{simple Lie $2$-algebra} is a Lie $2$-algebra which has $\dim(\g)\neq 1$ and does not have nonzero proper ideals. For instance, Cartan subalgebras of $\mathfrak{g}$ are Lie $2$-subalgebras of $\mathfrak{g}$.

	An element $x\in\mathfrak{g}$ is called \emph{semisimple} (respectively, \emph{$2$-nilpotent}) if $x$ lies in the $2$-subalgebra of $\mathfrak{g}$ generated by $x^{[2]}$, that is $x\in \vspan\left\{{x}^{[2]},{x}^{[2]^{2}},\ldots\right\}$
(respectively, if ${x}^{[2]^{n}}=0$ for some  $n\in \mathbb{N}$). A subset $\mathfrak{n}\subseteq\mathfrak{g}$  is called \emph{$2$-nilpotent} if all $x\in \mathfrak{n} $  is $2$-nilpotent. Engel's theorem implies that every $2$-nilpotent element is nilpotent.

	It is well-known that for any $x \in\mathfrak{g}$ there are unique elements  $x_{s}$ and $x_{n}$ in $\mathfrak{g}$ such that $x_{s}$ is semisimple, $x_{n}$ is $2$-nilpotent, and  $x = x_{s} + x_{n}$ with $[x_{s},x_{n}]=0$ (Jordan-Chevalley-Seligman decomposition, see \cite[p. 81]{SF88}).


\begin{definition}
A $2$-subalgebra $\mathfrak{t}$ of $\mathfrak{g}$ is called \emph{torus of} $ \mathfrak{g}$ if the 2-map is invertible on $\mathfrak{t}$.
\end{definition}


	It follows from Theorem $13$ in \cite[pp. 192--193]{ja} that for a torus $\mathfrak{t}$ of a Lie $2$-algebra $\mathfrak{g}$ there is a basis $\left\{t_{1},\dots, t_{n}\right\}$ such that $t^{[2]}_{i}=t_{i}$. The elements satisfying $t=t^{[2]}$ are called \emph{toral elements}. A torus $\mathfrak{t}_{1}$ of $\mathfrak{g}$ is called \emph{maximal} if the inclusion $\mathfrak{t}_{1}\subseteq \mathfrak{t}_{2}$, with $\mathfrak{t}_{2}$ torus of $\mathfrak{g}$,  implies $\mathfrak{t}_{1} =\mathfrak{t}_{2}$. The concept of relative toral rank first appeared in \cite{Str89} and is one of the most definitions in this paper.
 

\begin{definition}
The \emph{relative toral rank} (\emph{toral rank} for short) of a  Lie $2$-algebra $\mathfrak{g}$ is given by 
	$$
	{MT}(\mathfrak{g}):= \max \left\{\dim(\mathfrak{t})\mid \mathfrak{t}\text{ is a torus of }  \mathfrak{g} \right\}. 
	$$
\end{definition}


	By Theorem 4.1 in \cite[p. 86]{SF88} we conclude that the centralizer $\mathfrak{c}_{\mathfrak{g}}(\mathfrak{t})$ of any maximal torus $\mathfrak{t}$ in $\mathfrak{g}$ is a Cartan subalgebra of $\mathfrak{g}$. Let $\mathfrak{t}$ be a maximal torus of $\mathfrak{g}$,  $\mathfrak{h}:=\mathfrak{c}_{\mathfrak{g}}(\mathfrak{t})$, and let $\ad_{\mathfrak{g}}:\mathfrak{g}\longrightarrow \text{End}_{\field}(\mathfrak{g})$ be the adjoint $2$-representation (i.e., $\ad_{\mathfrak{g}}(x^{[2]})=\ad_{\mathfrak{g}}^{2}(x)$ for all $x\in \mathfrak{g}$). Since $\mathfrak{t}$ is abelian, $\ad_{\mathfrak{g}}(\mathfrak{t})$ is abelian and consists of semisimple  elements. Therefore,  $\g$ can be decomposed into weight spaces with respect to $\mathfrak{t}$ as
	$$
	\mathfrak{g}=\bigoplus_{\lambda\in {\mathfrak{t}}^{*}}\mathfrak{g}_{\lambda}, \quad \text{where}\quad \mathfrak{g}_{\lambda}:=\left\{v\in \mathfrak{g}\mid [t,v]= \lambda(t)v,\,\, \mbox{for all }t\in \mathfrak{t}\right\}.
	$$
The set of roots of $\mathfrak{g}$ with respect to $\mathfrak{t}$ denoted by $\Delta:=\left\{\lambda \in\mathfrak{t}^{*}\setminus\left\{0\right\}\mid \mathfrak{g}_{\lambda}\neq 0\right\}\subseteq \mathfrak{t}^{*}$ is called the $\mathfrak{t}$-\textit{roots} of $\mathfrak{g}$. Since $\mathfrak{g}_0=\mathfrak{h}$ we can conclude that 
	\begin{equation}
	\label{decomproot}
	\mathfrak{g}=\mathfrak{h}\oplus\bigoplus_{\lambda\in \Delta}\mathfrak{g}_{\lambda}, \quad \text{where}\quad \mathfrak{g}_{\lambda}=\left\{v\in \mathfrak{g}\mid [t,v]= \lambda(t)v,\,\, \mbox{for all }t\in \mathfrak{t}\right\}
	\end{equation}
is the \emph{root space decomposition of $\mathfrak{g}$ relative to $ \mathfrak{t}$}.


\begin{remark} 
\label{patrondepatrones}
If $t$ is a toral element of $\mathfrak{t}$, then $\lambda(t)\in \{0,1\}$, for any  $\lambda\in\Delta $. In fact,  
	$$
	\lambda(t)v=\left[t,v\right]=\left[t^{[2]},v\right]=\left[t,\left[t,v\right]\right]=\left[t,\lambda(t)v\right]=\lambda(t)\left[t,v\right]=\lambda(t)^2v, \ \mbox{for }v\in \g\setminus \{0\}.
	$$
\end{remark}



\section{A discussion on arbitrary toral rank}
\label{sec:MT}

Henceforth we consider the Lie $2$-algebra $\mathfrak{g}$ as in~\eqref{decomproot}. Jordan--Chevalley--Seligman decomposition (see \cite[p. 81]{SF88}) implies that the Cartan subalgebra has a decomposition $\mathfrak{h}=\mathfrak{t}\oplus \mathfrak{n}$, where $\mathfrak{t}$ is the maximal torus and $\mathfrak{n}$ is the set of $2$-nilpotent elements of $\mathfrak{g}$ contained in $\mathfrak{h}$ and $[\mathfrak{t},\mathfrak{n}]=0$.


\begin{remark}
\label{rem:nilp}

Throughout this paper we will consider that $\mathfrak{t}$ is \emph{standard} in $\mathfrak{g}$, that means, $\mathfrak{n}$ is an ideal of $\mathfrak{h}$. It is equivalent to $\mathfrak{h}$ be \emph{triangulable} on $\mathfrak{g}$, i.e., $[\mathfrak{h},\mathfrak{h}]$ acts nilpotently on $\mathfrak{g}$, for details see \cite{BlWil88}.	Furthermore, write $\mathfrak{g}_{\xi}^{[2]}:=\left\{x^{[2]}\mid x\in\g_{\xi}\right\}$. It follows from \cite[p. 31]{Strad04} that each $\mathfrak{t}$-root $\xi$ can be extended to $\xi\in\mathfrak{h}^*$ with $\xi\big|_{\mathfrak{n}}=0$. Hence, $\g_{\xi}^{[2]}\subseteq \ker(\xi)$ and this condition implies that 
	$$
	\left[\mathfrak{g}_{\xi},\mathfrak{g}_{\xi}\right]\subseteq\vspan\g_{\xi}^{[2]}\subseteq \ker(\xi)\ \ \ \mbox{for all }\ \xi\in\Delta.
	$$
\end{remark}


\begin{proposition}
\label{wefef}
	Let $\mathfrak{g}$ be a centerless Lie $2$-algebra with $MT(\mathfrak{g})=r$. Then there exists a basis of $\mathfrak{t}^{\ast}$ determined by $\mathfrak{t}$-roots of $\g$. Furthermore, $\dim(\mathfrak{g})\geq 2r.$
\end{proposition}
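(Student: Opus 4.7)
The plan is to prove the two claims in sequence, using the centerlessness to force enough $\mathfrak{t}$-roots to span $\mathfrak{t}^*$, and then combine this with the root-space decomposition to obtain the dimension bound.

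For the first claim, the strategy is a contrapositive argument using the center. First I would observe that $Z(\mathfrak{g})\subseteq \mathfrak{h}$: indeed, any central element $z$ has weight zero with respect to $\mathfrak{t}$ and hence lies in $\mathfrak{g}_{0}=\mathfrak{h}$. Now suppose the $\mathfrak{t}$-roots $\Delta$ do \emph{not} span $\mathfrak{t}^{\ast}$. By duality there exists a nonzero $t\in \mathfrak{t}$ with $\lambda(t)=0$ for every $\lambda\in \Delta$. For any $v\in \mathfrak{g}_{\lambda}$, $\lambda\in\Delta$, this yields $[t,v]=\lambda(t)v=0$. On the other hand, since $\mathfrak{h}=\mathfrak{c}_{\mathfrak{g}}(\mathfrak{t})$, we have $[t,\mathfrak{h}]=0$. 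Using the decomposition~\eqref{decomproot}, $t$ commutes with a spanning set of $\mathfrak{g}$, so $t\in Z(\mathfrak{g})$. Since $\mathfrak{g}$ is centerless, $t=0$, a contradiction. Therefore $\Delta$ spans $\mathfrak{t}^{\ast}$, and in particular contains a basis $\{\lambda_{1},\dots,\lambda_{r}\}$ of $\mathfrak{t}^{\ast}$.

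For the dimension bound, I would just combine what we already have. From $\mathfrak{t}\subseteq \mathfrak{h}$ we get $\dim(\mathfrak{h})\geq \dim(\mathfrak{t})=r$. The $r$ distinct $\mathfrak{t}$-roots $\lambda_{1},\dots,\lambda_{r}$ furnish $r$ pairwise distinct root spaces $\mathfrak{g}_{\lambda_{i}}$, each of which is nonzero (by definition of $\Delta$) and hence contributes at least $1$ to the dimension. Thus
$$
\dim(\mathfrak{g})=\dim(\mathfrak{h})+\sum_{\lambda\in \Delta}\dim(\mathfrak{g}_{\lambda})\geq r+\sum_{i=1}^{r}\dim(\mathfrak{g}_{\lambda_{i}})\geq r+r=2r,
$$
which finishes the argument.

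There isn't really a hard step here: the only subtle point is the observation that centerlessness together with $\mathfrak{h}=\mathfrak{c}_{\mathfrak{g}}(\mathfrak{t})$ forces $\Delta$ to separate the points of $\mathfrak{t}$, and once this is in place the rest is a counting argument in the weight decomposition~\eqref{decomproot}.
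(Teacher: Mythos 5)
Your proof is correct, and for the main claim it takes a genuinely different (and more self-contained) route than the paper. The paper handles $r=1$ exactly as you do --- a toral element annihilated by all roots would be central --- but for $r>1$ it instead invokes Definition 3 and Remark 2.2.6 of Premet--Strade to get the inequality ${MT}(\mathfrak{g})\leq \dim\left(\vspan\Delta\right)$, from which the contradiction follows. You bypass that citation entirely by running the $r=1$ argument uniformly in all ranks: if $\vspan\Delta\subsetneq\mathfrak{t}^{\ast}$, the annihilator of $\vspan\Delta$ in $\mathfrak{t}$ contains a nonzero $t$, which commutes with every root space by the weight condition and with $\mathfrak{h}=\mathfrak{c}_{\mathfrak{g}}(\mathfrak{t})$ by definition of the centralizer, hence is central --- contradicting centerlessness. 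This buys elementarity and independence from the external machinery on (absolute) toral rank; the paper's citation buys brevity and situates the statement inside the standard Premet--Strade framework. Your dimension count, $\dim(\mathfrak{g})\geq\dim(\mathfrak{h})+r\geq r+r$, is exactly the paper's second step.
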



\begin{proof}
	The case $r=1$ is easy to check. Indeed, if the thesis of the proposition was false then the toral element would belong to the center. For $r>1$, suppose for a contradiction that every basis of $\mathfrak{t}^{\ast}$ has at most $r-1$ $\mathfrak{t}$-roots. From Definition 3 and Remark 2.2.6 in \cite{PreStra06} we conclude that ${MT}(\g)\leq \dim \left(\vspan \Delta\right)\leq r-1$, which contradicts our assumption that ${MT}(\g)=r$.

	The second part of the proposition is an immediate consequence of the decomposition \eqref{decomproot} and the fact that $\dim(\mathfrak{h})\geq r$.
	
\end{proof}


\begin{lemma}
\label{lem:dim=1}
Let $\mathfrak{g}$ be a centerless Lie $2$-algebra. If $\dim\left(\mathfrak{g}_\xi\right)=1$ for all $\xi\in\Delta$, then 
	$$
	\mathfrak{I}:=\mathfrak{n} \oplus\bigoplus_{\xi\in\Delta} \mathfrak{g}_\xi
	$$  
is an ideal of $\mathfrak{g}$.
\end{lemma}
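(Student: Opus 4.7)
The plan is to verify $[\mathfrak{g},\mathfrak{I}]\subseteq \mathfrak{I}$ by decomposing $\mathfrak{g}=\mathfrak{t}\oplus\mathfrak{n}\oplus\bigoplus_{\xi\in\Delta}\mathfrak{g}_\xi$ and checking the bracket of each summand of $\mathfrak{g}$ against each summand of $\mathfrak{I}$. The point is that $\mathfrak{I}$ is precisely the complement of $\mathfrak{t}$ in $\mathfrak{g}$, so proving it is an ideal amounts to showing that no bracket produces a nonzero component in $\mathfrak{t}$.

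First I would handle the easy cases. Since $\mathfrak{h}=\mathfrak{c}_\mathfrak{g}(\mathfrak{t})$, we have $[\mathfrak{t},\mathfrak{n}]=0\subseteq \mathfrak{I}$ and $[\mathfrak{t},\mathfrak{g}_\xi]\subseteq \mathfrak{g}_\xi\subseteq \mathfrak{I}$. For the nilpotent part, $[\mathfrak{n},\mathfrak{n}]\subseteq \mathfrak{n}$ because $\mathfrak{n}$ is an ideal of $\mathfrak{h}$ (Remark~\ref{rem:nilp}). Moreover, for $n\in\mathfrak{n}$, $v\in\mathfrak{g}_\xi$, and $t\in\mathfrak{t}$, the Jacobi identity together with $[t,n]=0$ gives
$$
[t,[n,v]]=[[t,n],v]+[n,[t,v]]=\xi(t)[n,v],
$$
so $[\mathfrak{n},\mathfrak{g}_\xi]\subseteq \mathfrak{g}_\xi\subseteq \mathfrak{I}$, and symmetrically $[\mathfrak{g}_\xi,\mathfrak{n}]\subseteq \mathfrak{I}$.

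The remaining case is $[\mathfrak{g}_\eta,\mathfrak{g}_\xi]$ for $\eta,\xi\in\Delta$. The usual weight-space argument shows $[\mathfrak{g}_\eta,\mathfrak{g}_\xi]\subseteq \mathfrak{g}_{\eta+\xi}$. If $\eta+\xi\neq 0$, then this is either contained in $\mathfrak{g}_{\eta+\xi}\subseteq \mathfrak{I}$ (when $\eta+\xi\in\Delta$) or is zero (when $\eta+\xi\notin \Delta$), so we are done. The delicate case is $\eta+\xi=0$, which in characteristic two means $\eta=\xi$, and this is exactly where the hypothesis $\dim(\mathfrak{g}_\xi)=1$ intervenes: if $v$ spans $\mathfrak{g}_\xi$, then $[\mathfrak{g}_\xi,\mathfrak{g}_\xi]$ is spanned by $[v,v]=0$ by the Lie-algebra axiom. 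Hence $[\mathfrak{g}_\xi,\mathfrak{g}_\xi]=0\subseteq \mathfrak{n}\subseteq \mathfrak{I}$.

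The main obstacle, although it is quickly resolved, is precisely this self-bracket $[\mathfrak{g}_\xi,\mathfrak{g}_\xi]$: a priori it lies in $\mathfrak{h}=\mathfrak{t}\oplus\mathfrak{n}$ and could contribute a nonzero torus component which would spoil the claim. Remark~\ref{rem:nilp} by itself only guarantees $[\mathfrak{g}_\xi,\mathfrak{g}_\xi]\subseteq \ker(\xi)$, which is insufficient to exclude a torus piece; the one-dimensionality assumption is what forces the bracket to vanish outright and thereby makes $\mathfrak{I}$ an ideal.
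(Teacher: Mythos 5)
Your proposal is correct and follows essentially the same route as the paper: a direct check that every bracket lands in $\mathfrak{I}$, with the crucial point being that $[\mathfrak{g}_\xi,\mathfrak{g}_\xi]=0$ because a one-dimensional space is spanned by a single $v$ with $[v,v]=0$. The only cosmetic difference is that for $[\mathfrak{n},\mathfrak{g}_\xi]$ you settle for the inclusion $[\mathfrak{n},\mathfrak{g}_\xi]\subseteq\mathfrak{g}_\xi$ via the Jacobi identity, while the paper notes the stronger fact that it vanishes since $\ad_{\mathfrak{g}}(n)$ is a nilpotent operator on the one-dimensional space $\mathfrak{g}_\xi$; either suffices.
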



\begin{proof}
Since $\dim\left(\mathfrak{g}_\xi\right)=1$ for all $\xi\in\Delta$, we have $[\mathfrak{g}_\xi,\mathfrak{g}_\xi]=0$ and $[\mathfrak{n},\mathfrak{g}_\xi]=0$ in view of the fact that $\ad_{\mathfrak{g}}(n)$ is a nilpotent operator over $\mathfrak{g}_{\xi}$ for all $n\in\mathfrak{n}$. 

\end{proof}


The next result will be readily employed in Section 3 in the study of  Lie $2$-algebras whenever at least two root spaces (relative to $\mathfrak{t}$) have different dimensions. 


\begin{proposition}
\label{proposicion51}
Let $\xi,\eta\in\Delta$ be two $\mathfrak{t}$-roots. If $\mathfrak{g}_{\xi}^{[2]}\not\subseteq\ker(\eta)$, then $\dim\left(\mathfrak{g}_{\eta}\right)=\dim\left(\mathfrak{g}_{\xi+\eta}\right)$.
\end{proposition}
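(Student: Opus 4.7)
The plan is to exploit the special feature of characteristic $2$: for $x\in\mathfrak g_\xi$, the element $x^{[2]}$ lies in $\mathfrak g_{2\xi}=\mathfrak g_0=\mathfrak h$, and more importantly, $\ad(x)$ applied twice shifts weights by $2\xi=0$. Concretely, $\ad_{\mathfrak g}(x)$ restricts to linear maps
\[
\mathfrak g_\eta \xrightarrow{\ \ad(x)\ } \mathfrak g_{\xi+\eta} \xrightarrow{\ \ad(x)\ } \mathfrak g_{2\xi+\eta}=\mathfrak g_\eta,
\]
whose composition equals $\ad(x^{[2]})$ by axiom (ii) of the $2$-map. The idea is to show that under the hypothesis, this composition is \emph{invertible} on both $\mathfrak g_\eta$ and $\mathfrak g_{\xi+\eta}$; two injective maps between the two spaces then force $\dim(\mathfrak g_\eta)=\dim(\mathfrak g_{\xi+\eta})$.

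To make the invertibility claim precise, first use the hypothesis to pick $x\in\mathfrak g_\xi$ with $\eta(x^{[2]})\neq 0$ (where $\eta$ has been extended to $\mathfrak h^{*}$ with $\eta|_{\mathfrak n}=0$, as in Remark~\ref{rem:nilp}). Decompose $x^{[2]}=t'+n'$ according to $\mathfrak h=\mathfrak t\oplus\mathfrak n$. The triangulability assumption (Remark~\ref{rem:nilp}) guarantees that $\ad(n')$ is nilpotent on $\mathfrak g$, while clearly $\ad(n')$ preserves every weight space $\mathfrak g_{\mu}$ because $n'\in\mathfrak h$ commutes with $\mathfrak t$. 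Therefore on $\mathfrak g_\eta$,
\[
\ad(x)^{2}\big|_{\mathfrak g_\eta}=\ad(x^{[2]})\big|_{\mathfrak g_\eta}=\eta(t')\,\mathrm{id}+\ad(n')\big|_{\mathfrak g_\eta},
\]
a scalar $\eta(t')=\eta(x^{[2]})\neq 0$ plus a nilpotent operator, hence invertible.

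The same computation on $\mathfrak g_{\xi+\eta}$ yields the scalar $(\xi+\eta)(x^{[2]})$. Here is where Remark~\ref{rem:nilp} again is crucial: $\mathfrak g_\xi^{[2]}\subseteq\ker(\xi)$, so $\xi(x^{[2]})=0$ and $(\xi+\eta)(x^{[2]})=\eta(x^{[2]})\neq 0$. Thus $\ad(x)^{2}$ is invertible on $\mathfrak g_{\xi+\eta}$ as well. Invertibility of $\ad(x)^{2}$ on $\mathfrak g_\eta$ forces the first arrow $\ad(x)\colon\mathfrak g_\eta\to\mathfrak g_{\xi+\eta}$ to be injective, so $\dim(\mathfrak g_\eta)\leq\dim(\mathfrak g_{\xi+\eta})$; invertibility on $\mathfrak g_{\xi+\eta}$ likewise gives injectivity of $\ad(x)\colon\mathfrak g_{\xi+\eta}\to\mathfrak g_\eta$ and the opposite inequality.

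The only subtle step is the split $x^{[2]}=t'+n'$ together with the extension $\eta|_{\mathfrak n}=0$, which together ensure that the scalar part of $\ad(x^{[2]})$ on each weight space is exactly $\eta(x^{[2]})$; everything else is bookkeeping that works because $2=0$ collapses the $\ad(x)$-orbit to length two. No deep structure is required beyond Remark~\ref{rem:nilp} and the identity $\ad(x^{[2]})=\ad(x)^{2}$.
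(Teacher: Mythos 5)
Your proof is correct and follows essentially the same route as the paper's: both pick $x\in\mathfrak g_\xi$ with $\eta(x^{[2]})\neq 0$, split $x^{[2]}=t+n$ along $\mathfrak h=\mathfrak t\oplus\mathfrak n$, and use that $\ad(x^{[2]})=\ad(x)^2$ acts on $\mathfrak g_\eta$ and on $\mathfrak g_{\xi+\eta}$ as a nonzero scalar ($\eta(t)$, resp.\ $(\xi+\eta)(t)=\eta(t)$ via $\mathfrak g_\xi^{[2]}\subseteq\ker\xi$) plus a nilpotent, forcing $\ad(x)$ to be injective in both directions. The only cosmetic difference is that you phrase this as invertibility of the composite $\ad(x)^2$, whereas the paper argues directly that the kernel of each restriction of $\ad(x)$ is trivial.
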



\begin{proof}
From Remark~\ref{rem:nilp} we can assume that $\xi\neq\eta$. Consider $x\in\mathfrak{g}_{\xi}$ such that $x^{[2]}\notin \ker(\eta)$ and the restricted adjoint mapping $\ad_{\mathfrak{g}}(x)\big|_{\mathfrak{g}_{\eta}}\colon\mathfrak{g}_{\eta}\longrightarrow\mathfrak{g}_{\xi+\eta}$. Set $x^{[2]}=t+n$ for some $t\in\mathfrak{t}$ and $n\in\mathfrak{n}$.

	Let $y\in\mathfrak{g}_{\eta}$ be such that $\ad_{\mathfrak{g}}(x)(y)=0$. This implies that $\left[t+n,y\right]=\left[x^{[2]},y\right]=0$, hence $\eta(t)y=\ad_{\mathfrak{g}}(n)(y)=[n,y]=0$. Since $x^{[2]}\notin \ker(\eta)$ and $0\neq\eta(t+n)=\eta(t)$ we have $y=0$. Therefore $\ad_{\mathfrak{g}}(x)\big|_{\mathfrak{g}_{\eta}}\colon\mathfrak{g}_{\eta}\longrightarrow\mathfrak{g}_{\xi+\eta}$ is injective.

Analogously, we can prove that $\ad_{\mathfrak{g}}(x)\big|_{\mathfrak{g}_{\xi+\eta}}\colon\mathfrak{g}_{\xi+\eta}\longrightarrow \mathfrak{g}_{\eta}$ is an injective linear map, since $(\xi+\eta)(t)=\xi(t)+\eta(t)=0+\eta(t)=\eta(t)\neq0$.

\end{proof}







\begin{lemma}
\label{lem:dimneq}
Let $\mathfrak{g}$ be a Lie $2$-algebra with toral rank $r$ and $\{\alpha_1,\alpha_2,\dots,\alpha_k\}\subseteq\Delta$ be linearly independent $\mathfrak{t}$-roots. If $\dim(\mathfrak{g}_{\alpha_i})\neq\dim(\mathfrak{g}_{\alpha_1+\alpha_i})$ for all $2\leq i\leq k$, then $\mathfrak{g}_{\alpha_1}^{[2]}\subseteq I\oplus \mathfrak{n}$, where $I\subseteq \mathfrak{t}$ and $\dim I\leq r-k$.
\end{lemma}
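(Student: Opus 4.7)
The plan is to piece together two facts already in the paper: the built-in containment $\mathfrak{g}_{\alpha_1}^{[2]}\subseteq \ker(\alpha_1)$ from Remark~\ref{rem:nilp}, and the contrapositive of Proposition~\ref{proposicion51}. Together these will force $\mathfrak{g}_{\alpha_1}^{[2]}$ to sit inside the common kernel of all $\alpha_i$ in $\mathfrak{h}$, and the bound on $\dim I$ will come immediately from linear independence.

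First I would check that $\mathfrak{g}_{\alpha_1}^{[2]}\subseteq \mathfrak{h}$. For $x\in \mathfrak{g}_{\alpha_1}$ and $t\in\mathfrak{t}$, the 2-map axiom gives $[x^{[2]},t]=[x,[x,t]] = \alpha_1(t)[x,x]=0$, so $x^{[2]}$ centralizes $\mathfrak{t}$ and hence lies in $\mathfrak{h}=\mathfrak{t}\oplus\mathfrak{n}$. Writing $x^{[2]}=t_x+n_x$ with $t_x\in\mathfrak{t}$ and $n_x\in\mathfrak{n}$, and recalling that each $\alpha_i$ extends to $\mathfrak{h}^\ast$ with $\alpha_i|_{\mathfrak{n}}=0$, evaluating any root at $x^{[2]}$ reads off only the $\mathfrak{t}$-component: $\alpha_i(x^{[2]})=\alpha_i(t_x)$.

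Next I would apply the contrapositive of Proposition~\ref{proposicion51} with $\xi=\alpha_1$ and $\eta=\alpha_i$, for each $2\leq i\leq k$. The hypothesis $\dim(\mathfrak{g}_{\alpha_i})\neq \dim(\mathfrak{g}_{\alpha_1+\alpha_i})$ forces $\mathfrak{g}_{\alpha_1}^{[2]}\subseteq \ker(\alpha_i)$. Combined with $\mathfrak{g}_{\alpha_1}^{[2]}\subseteq \ker(\alpha_1)$ from Remark~\ref{rem:nilp}, we obtain
\[
\mathfrak{g}_{\alpha_1}^{[2]}\subseteq \bigcap_{i=1}^{k} \ker(\alpha_i)\cap \mathfrak{h}.
\]
By the computation above, this is equivalent to $t_x\in I:=\bigcap_{i=1}^{k}\ker(\alpha_i|_{\mathfrak{t}})$ for every $x\in\mathfrak{g}_{\alpha_1}$, so $\mathfrak{g}_{\alpha_1}^{[2]}\subseteq I\oplus\mathfrak{n}$.

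Finally, since $\{\alpha_1,\dots,\alpha_k\}$ are linearly independent elements of $\mathfrak{t}^\ast$ and $\dim\mathfrak{t}=r$, their $k$ restrictions $\alpha_i|_{\mathfrak{t}}$ cut out a subspace of codimension exactly $k$ in $\mathfrak{t}$, giving $\dim I = r-k$ (and in particular $\leq r-k$). There is no real obstacle here; the argument is essentially a bookkeeping of what Remark~\ref{rem:nilp} and Proposition~\ref{proposicion51} already say. The only subtle point worth double-checking is that $\mathfrak{g}_{\alpha_1}^{[2]}$ genuinely lands in $\mathfrak{h}$ so that splitting $x^{[2]}=t_x+n_x$ is legitimate, which is the short bracket calculation carried out in the first step.
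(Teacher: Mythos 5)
Your proposal is correct and follows essentially the same route as the paper's proof: the containment $\mathfrak{g}_{\alpha_1}^{[2]}\subseteq\ker(\alpha_1)$ from Remark~\ref{rem:nilp}, the contrapositive of Proposition~\ref{proposicion51} applied to each pair $(\alpha_1,\alpha_i)$, and the linear-algebra fact that $k$ linearly independent functionals on $\mathfrak{t}$ cut out a subspace of codimension $k$. The only cosmetic difference is that the paper realizes $I$ concretely as $\vspan\{t_{k+1},\dots,t_r\}$ via a dual basis, whereas you describe it as $\bigcap_{i}\ker(\alpha_i|_{\mathfrak{t}})$; your preliminary check that $\mathfrak{g}_{\alpha_1}^{[2]}\subseteq\mathfrak{h}$ is a welcome explicit justification of a step the paper leaves implicit.
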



\begin{proof}
Recall that $\mathfrak{g}_{\alpha_1}^{[2]}\subseteq \ker(\alpha_1)$. Let us consider a basis $\{t_1,t_2,\dots,t_r\}$ of $\mathfrak{t}$ such that its dual basis of $\mathfrak{t}^*$ satisfies $\alpha_i(t_j)=\delta_{i,j}$ for all $1\leq i,j\leq k$ and $\delta_{i,j}$ is the Kronecker delta. From the latter proposition it is perceived 
	$$
	\mathfrak{g}_{\alpha_1}^{[2]}\subseteq \bigcap_{1\leq i\leq k} \ker(\alpha_i).
	$$ 
Since $t_i\not\in\ker(\alpha_i)$, then $\mathfrak{g}_{\alpha_1}^{[2]}\subseteq \vspan\{t_{k+1},\dots,t_r\}\oplus \mathfrak{n}$ where $\vspan\{t_{k+1},\dots,t_r\}=0$ if $k=r$.

\end{proof}


\subsection{Root space decomposition of a Lie 2-algebra of toral rank 3}

Henceforth we will fix a centerless Lie $2$-algebra $\mathfrak{g}$ (not necessarily simple Lie algebra) with ${MT}(\mathfrak{g})=3$. In this subsection we describe the root space decomposition of $\mathfrak{g}$.

Consider $\left\{t_1,t_2,t_3\right\}$ a fixed basis of toral elements of $\mathfrak{t}$. By dint of Remark \ref{patrondepatrones} we can identify any $\lambda\in\mathfrak{t}^{*}$ as a $3$-vector of the form $\lambda=(\lambda(t_{1}),\lambda(t_{2}),\lambda(t_{3}))$, where $\lambda(t_i)\in\{0,1\}$ for all $i=1,2,3$. Therefore
	$$
	\mathfrak{t}^{*}= \{0,\alpha,\beta,\gamma,\alpha+\beta,\alpha +\gamma,\beta+\gamma,\alpha+\beta+\gamma\}
	$$
where $\alpha=(1,0,0) $, $\beta=(0,1,0)$, $\gamma= (0,0,1)$ is the canonical basis of $\mathfrak{t^{*}}$. Since $\mathfrak{h} =\mathfrak{t}\oplus \mathfrak{n}$, the root space decomposition of $\g$ relative to $\mathfrak{t}$ is
	\begin{equation}
	\label{patron}
	\mathfrak{g}=\mathfrak{t} \oplus \mathfrak{n} \oplus \mathfrak{g}_{\alpha}\oplus\mathfrak{g}_{\beta} \oplus\mathfrak{g}_{\gamma}\oplus\mathfrak{g}_{\alpha + \beta}\oplus\mathfrak{g}_{\alpha+ \gamma}\oplus \mathfrak{g}_{\beta+\gamma}\oplus\mathfrak{g}_{\alpha + \beta + \gamma}.
	\end{equation}
	
	It follows from Proposition \ref{wefef} that there exist at least three linearly independent $\mathfrak{t}$-roots, which implies that $\dim\left(\g\right)\geq 6$. To study $\mathfrak{g}$ we use the cardinality of $\Delta$ (denoted by $\text{Card}(\Delta)$). Then, without loss of generality we may assume that $\mathfrak{g}_{\alpha}$, $\mathfrak{g}_{\beta}$, and $\mathfrak{g}_{\gamma}$ are nonzero which will result in the following possible cases:
 
\begin{enumerate}[1.]
\item If $\text{Card}(\Delta)=3$, we have $\Delta_1=\{\alpha,\beta,\gamma\}$.
\item If $\text{Card}(\Delta)=4$, we have: 
	\begin{enumerate}[a.]
	\item $\Delta_{2}=\{\alpha,\beta,\gamma,\alpha+\beta\}$,
	\item $\Delta_{3}=\{\alpha,\beta,\gamma,\alpha+\beta+\gamma\}$.
	\end{enumerate}
\item If $\text{Card}(\Delta)=5$, we have:
	\begin{enumerate}[a.]
	\item $\Delta_{4}=\{\alpha,\beta,\gamma,\alpha+\beta,\alpha+\gamma\}$,
	\item $\Delta_{5}=\{\alpha,\beta,\gamma,\alpha+\beta,\alpha+\beta+\gamma\}$. 
	\end{enumerate}
\item If $\text{Card}(\Delta)=6$, we have:
	\begin{enumerate}[a.]
	\item $\Delta_{6}=\{\alpha,\beta,\gamma,\alpha+\beta,\alpha+\gamma,\beta+\gamma\}$, 
	\item $\Delta_{7}=\{\alpha,\beta,\gamma,\alpha+\beta,\alpha+\gamma,\alpha+\beta+\gamma\}$. 
\end{enumerate}
\item If $\text{Card}(\Delta)=7,$ then $\Delta_0=\mathfrak{t}^*\backslash\{0\}$.
\end{enumerate}

For $0\leq i\leq 7,$ we denote by $\g^{\Delta_{i}}$ the Lie $2$-algebra with its associated root space $\Delta_{i}$ from the previous list. In other words,
	\begin{equation*}
	\mathfrak{g}^{\Delta_i}=\mathfrak{t} \oplus\mathfrak{n} 	\oplus\underset{\xi\in\Delta_i}\bigoplus\mathfrak{g}_\xi.
	\end{equation*}

We conclude this section with the following technical lemma which provides a basis of the ideal $I$ in Lemma~\ref{lem:dimneq} in terms of the $\mathfrak{t}$-roots $\{\alpha,\beta,\gamma\}$.


\begin{lemma}
\label{lem:tech}
Set $\mathfrak{g}=\mathfrak{g}^{\Delta_i}$ with $0\leq i\leq 7$ and $t_{\alpha}=t_1$, $t_{\beta}=t_2$, $t_{\gamma}=t_3$. Let $\xi\in\Delta_i$ be a $\mathfrak{t}$-root, and $\rho$ and $\omega$ different $\mathfrak{t}$-roots in $\{\alpha,\beta,\gamma\}$. 
\begin{enumerate}[(i)]
\item If $\xi\in\left\{\alpha,\beta,\gamma\right\}$, then
	$$
	\left[\mathfrak{g}_{\xi},\mathfrak{g}_{\xi}\right]\subseteq\vspan\left\{t_{\sigma}\mid \sigma+\xi\in\Delta_i\right\}\oplus\mathfrak{n}.	
	$$
\item If $\xi=\rho+\omega$, then
	$$
	\left[\mathfrak{g}_{\xi},\mathfrak{g}_{\xi}\right]\subseteq\vspan\left\{t_{\rho}+t_{\omega}\right\}\oplus\vspan\left\{t_{\alpha+\beta+\gamma+\xi}\mid \alpha+\beta+\gamma\in\Delta_i\right\}\oplus\mathfrak{n}.
	$$
\item If $\xi=\alpha+\beta+\gamma$, then
	$$
	\left[\mathfrak{g}_{\xi},\mathfrak{g}_{\xi}\right]\subseteq\vspan\left\{t_{\alpha}+t_{\gamma}, t_{\beta}+t_{\gamma}\right\}\oplus\mathfrak{n}.
	$$
\end{enumerate}

\end{lemma}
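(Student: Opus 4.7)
The plan is to combine two ingredients already in the excerpt. From Remark~\ref{rem:nilp} we have $[\mathfrak{g}_\xi,\mathfrak{g}_\xi]\subseteq\vspan\,\mathfrak{g}_\xi^{[2]}\subseteq\ker(\xi)\subseteq\mathfrak{h}=\mathfrak{t}\oplus\mathfrak{n}$, so every bracket on $\mathfrak{g}_\xi$ lies in $\ker(\xi)\cap\mathfrak{h}$. From the contrapositive of Proposition~\ref{proposicion51}, whenever $\eta\in\Delta_i$ satisfies $\eta\neq\xi$ and $\xi+\eta\notin\Delta_i\cup\{0\}$, we have $\dim(\mathfrak{g}_{\xi+\eta})=0<\dim(\mathfrak{g}_\eta)$, forcing $\mathfrak{g}_\xi^{[2]}\subseteq\ker(\eta)$. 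Each case of the lemma thus reduces to intersecting $\ker(\xi)\cap\mathfrak{h}$ with the extra $\ker(\eta)\cap\mathfrak{h}$'s forced by the roots available in $\Delta_i$, read off in the basis $\{t_\alpha,t_\beta,t_\gamma\}=\{t_1,t_2,t_3\}$ with dual basis $\{\alpha,\beta,\gamma\}$.

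For (i), take $\xi=\alpha$; the cases $\xi=\beta,\gamma$ are permutation-symmetric. Then $\ker(\alpha)\cap\mathfrak{h}=\vspan\{t_\beta,t_\gamma\}\oplus\mathfrak{n}$. Applying the observation above with $\eta=\beta$ deletes $t_\beta$ from this sum exactly when $\alpha+\beta\notin\Delta_i$, and similarly $t_\gamma$ is deleted exactly when $\alpha+\gamma\notin\Delta_i$; meanwhile $t_\alpha$ is automatically absent since $\alpha+\alpha=0\notin\Delta_i$. The surviving span is therefore $\vspan\{t_\sigma\mid\sigma\in\{\alpha,\beta,\gamma\},\ \sigma+\alpha\in\Delta_i\}\oplus\mathfrak{n}$, exactly the claimed bound.

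For (ii), take $\xi=\alpha+\beta$ (the other two pairs are symmetric). Here $\ker(\alpha+\beta)\cap\mathfrak{h}=\vspan\{t_\alpha+t_\beta,t_\gamma\}\oplus\mathfrak{n}$. Among $\eta\in\{\alpha,\beta,\gamma\}$, the sums $\xi+\alpha=\beta$ and $\xi+\beta=\alpha$ always lie in $\Delta_i$, so only $\eta=\gamma$ can activate the contrapositive: it does so precisely when $\xi+\gamma=\alpha+\beta+\gamma\notin\Delta_i$, in which case we gain $\ker(\gamma)$ and $t_\gamma$ is killed; otherwise $t_\gamma$ survives. Since $\alpha+\beta+\gamma+\xi=\gamma$, this matches the stated formula verbatim. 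For (iii), $\xi=\alpha+\beta+\gamma$ takes value $1$ on every $t_i$, so $\ker(\xi)\cap\mathfrak{h}=\{a_1t_1+a_2t_2+a_3t_3\mid a_1+a_2+a_3=0\}\oplus\mathfrak{n}=\vspan\{t_\alpha+t_\gamma,t_\beta+t_\gamma\}\oplus\mathfrak{n}$, already the claimed bound with no further appeal to Proposition~\ref{proposicion51} required.

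The logic is short; the only real obstacle is careful case bookkeeping to match the lemma's notation in every sub-case. A minor sanity check worth flagging is that extra constraints from $\eta\notin\{\alpha,\beta,\gamma\}$ — for instance $\eta=\alpha+\gamma$ in case~(ii), giving $\mathfrak{g}_\xi^{[2]}\subseteq\ker(\alpha+\gamma)$ when $\beta+\gamma\notin\Delta_i$ — can only sharpen the inclusion, so they never invalidate the upper bound stated in the lemma.
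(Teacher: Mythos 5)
Your proof is correct, and its skeleton matches the paper's: write an element of $[\mathfrak{g}_{\xi},\mathfrak{g}_{\xi}]\subseteq\mathfrak{h}$ as $t+n$ with $t\in\mathfrak{t}$, $n\in\mathfrak{n}$, get $\xi(t)=0$ from Remark~\ref{rem:nilp}, then force $\rho(t)=0$ for every root $\rho\in\Delta_i$ with $\rho+\xi\notin\Delta_i$, and finally read everything off in the basis $\{t_1,t_2,t_3\}$ exactly as you do in (i)--(iii). Where you genuinely diverge is in how that vanishing $\rho(t)=0$ is obtained. The paper proves it directly: for $z\in\mathfrak{g}_{\rho}$ the Jacobi identity applied to $[z,[x_{\xi},y_{\xi}]]$ has both inner brackets landing in $\mathfrak{g}_{\rho+\xi}=0$, so $\rho(t)z+\ad_{\mathfrak{g}}(n)(z)=0$, and nilpotency of $\ad_{\mathfrak{g}}(n)$ on $\mathfrak{g}_{\rho}$ kills $\rho(t)$. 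You instead route through $[\mathfrak{g}_{\xi},\mathfrak{g}_{\xi}]\subseteq\vspan\mathfrak{g}_{\xi}^{[2]}$ and the contrapositive of Proposition~\ref{proposicion51}, using $\dim(\mathfrak{g}_{\rho+\xi})=0\neq\dim(\mathfrak{g}_{\rho})$. Both are sound: the proposition applies because in every instance you invoke it $\rho$ and $\xi$ are distinct roots of $\Delta_i$, so $\rho+\xi\neq 0$ and $\mathfrak{g}_{\rho+\xi}$ really is the zero space. Your version is slightly more economical, reusing machinery already in place (it is essentially Lemma~\ref{lem:dimneq} specialized to vanishing root spaces, in the same spirit as the paper's own use of these tools in Section~\ref{sec:centerless}), whereas the paper's direct Jacobi computation keeps the lemma self-contained. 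Your closing observation that constraints coming from roots $\eta\notin\{\alpha,\beta,\gamma\}$ can only sharpen the stated upper bound is also correct.
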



\begin{proof}
Let $\xi\in\Delta_i$ be a $\mathfrak{t}$-root and let $x=[x_{\xi},y_{\xi}]\in \left[\mathfrak{g}_{\xi},\mathfrak{g}_{\xi}\right]\subseteq\mathfrak{h}=\mathfrak{t}\oplus\mathfrak{n}$ be an arbitrary element. Hence $x=t+n$ for some $t\in\mathfrak{t}$ and $n\in\mathfrak{n}$. Clearly,
	$$
	t=\alpha(t)t_{\alpha}+\beta(t)t_{\beta}+\gamma(t)t_{\gamma}.
	$$
From Remark~\ref{rem:nilp} one has $[\mathfrak{g}_\xi,\mathfrak{g}_\xi]\subseteq\ker\xi$ which yields $\xi(t)=0$. If $\rho\in\Delta_i$ satisfies that $\rho+\xi\not\in\Delta_i$, for any $z\in\mathfrak{g}_{\rho}$ from the Jacobi identity we have 
	\begin{align*}
   0&= [z,x]+[y_{\xi},[z,x_{\xi}]]+[x_{\xi},[y_{\xi},z]]=[z,x]\\
	&= [z,t]+[z,n]=\rho(t)z+\ad_{\g}(n)(z).
	\end{align*} 
Since $\ad_{\mathfrak{g}}(n)$ is a nilpotent operator over $\mathfrak{g}_{\rho}$, we can conclude that $\rho(t)=0$. 

By the above yields the following:

\begin{enumerate}
\item[\textit{(i)}] If $\xi\in\left\{\alpha,\beta,\gamma\right\}$, then $t\in\vspan\left\{t_{\sigma}\mid \sigma+\xi\in\Delta_i\right\}$.
\item[\textit{(ii)}] If $\xi=\rho+\omega$ with $\rho,\omega\in\left\{\alpha,\beta,\gamma\right\}$, then $\rho(t)+\omega(t)=0$, which implies that $\rho(t)=\omega(t)$ and therefore 
	$$
	t\in\vspan\left\{t_{\rho}+t_{\omega}\right\}\oplus\vspan\left\{t_{\alpha+\beta+\gamma+\xi}\mid \alpha+\beta+\gamma\in\Delta_i\right\}.
	$$
\item[\textit{(iii)}] If $\xi=\alpha+\beta+\gamma$, then $\alpha(t)+\beta(t)+\gamma(t)=0$.
\end{enumerate}

\end{proof}


\section{On simplicity of Lie 2-algebras}
\label{sec:centerless}

The goal of this section is to give a necessary condition for simple Lie $2$-algebras which have a triangulable Cartan subalgebra, with $MT(\mathfrak{g})=3$. 

Let us start with $\Delta = \Delta_{0}$. To this end, we study the $\mathfrak{t}$-root space decomposition of a centerless Lie $2$-algebra $\mathfrak{g}$ of toral rank $3$, that is,
	$$	
	\mathfrak{g}=\mathfrak{t}\oplus\mathfrak{n}\oplus \mathfrak{g}_{\alpha}\oplus\mathfrak{g}_{\beta}\oplus \mathfrak{g}_{\gamma}\oplus \mathfrak{g}_{\alpha+\beta}\oplus \mathfrak{g}_{\alpha+\gamma}\oplus \mathfrak{g}_{\beta+\gamma}\oplus \mathfrak{g}_{\alpha+\beta+\gamma}.
	$$

Note that, if $\Gamma\subseteq\mathfrak{t}^*$ is an additive subgroup, then
	$$
	I=\sum_{\xi\not\in\Gamma}\mathfrak{g}_{\xi}+\sum_{\xi,\eta\not\in\Gamma}[\mathfrak{g}_{\xi},\mathfrak{g}_{\eta}]	
	$$
is an ideal of $\mathfrak{g}$. It is a proper ideal provided that $\sum_{\xi\not\in\Gamma}[\mathfrak{g}_{\xi},\mathfrak{g}_{\xi}]\neq\mathfrak{h}$.

Denote by $\mathbb{Z}\Delta$ the subgroup of $\mathfrak{t}^*$ spanned by the roots.







\begin{lemma}
Let $\mathfrak{g}$ be a centerless Lie $2$-algebra with toral rank $n$ and $\Gamma$ the finite group spanned by $n-1$ linearly independent roots all of them belonging to $\Delta$. Suppose that there exists $\alpha \in \Gamma$ such that $\dim(\mathfrak{g}_{\xi})\neq\dim(\mathfrak{g}_{\alpha})$ for each $\xi\in\mathbb{Z}\Delta\backslash\Gamma$. Then $\mathfrak{g}$ contains a nonzero proper ideal $I$.
\end{lemma}


\begin{proof}
Define $I$ as shown above. If $\xi\not\in\Gamma$, then $\xi+\alpha\not\in\Gamma$ as well, whence
$\dim\mathfrak{g}_{\alpha}\neq\dim\mathfrak{g}_{\xi+\alpha}$, and therefore $\mathfrak{g}_{\xi}^{[2]}\subseteq\ker(\alpha)$. It follows that $[\mathfrak{g}_{\xi},\mathfrak{g}_{\xi}]\subseteq\ker(\alpha)$ for each $\xi\not\in\Gamma$, and so $I\neq\mathfrak{g}$.
\end{proof}


\begin{corollary}
Let $\mathfrak{g}$ be a simple Lie $2$-algebra of toral rank $n$. If there exists $\alpha\in\Delta$ such that $\dim(\mathfrak{g}_{\alpha})=d$, then there exist more than $n-1$ linearly independent roots with the same dimension $d$. 
\end{corollary}


\begin{corollary}
\label{Simple3}
Let $\mathfrak{g}$ be a simple Lie $2$-algebra of toral rank $3$. There exists $d$ such that for any root $\alpha\in\Delta$, we have $\dim (\mathfrak{g}_{\alpha})=d$.
\end{corollary}


\begin{proof}
Suppose that there exist $\alpha,\ \beta\in\Delta$ such that $d_1=\dim (\mathfrak{g}_{\alpha})\neq\dim (\mathfrak{g}_{\alpha+\beta})=d_2$. Thus, the latter Corollary split $\Delta$ as
\[
S_1=\{\alpha,\beta,\gamma\},\qquad\qquad S_2=\{\alpha+\beta,\alpha+\gamma,\beta+\gamma,\alpha+\beta+\gamma\}.
\]
such that for any $\xi\in S_i$, we have $\dim (\mathfrak{g}_{\xi})=d_i$. Set $\xi=\alpha,\beta$. Since, $\dim (\mathfrak{g}_{\alpha+\beta})\neq\dim (\mathfrak{g}_{\xi})$ and $\dim (\mathfrak{g}_{\xi})\neq\dim (\mathfrak{g}_{\alpha+\beta+\gamma})$,  hence
\begin{align*}
\mathfrak{g}_{\xi}^{[2]}&\subset \ker (\alpha)\cap \ker (\beta),\\
\mathfrak{g}_{\beta+\gamma}^{[2]}&\subset \ker (\alpha)\cap \ker (\beta+\gamma)\subset\ker(\alpha+\beta+\gamma),\\
\mathfrak{g}_{\alpha+\gamma}^{[2]}&\subset \ker (\alpha+\gamma)\cap \ker (\beta)\subset\ker(\alpha+\beta+\gamma).
\end{align*}
Now, consider $\xi=\alpha,\gamma$. From, $\dim (\mathfrak{g}_{\alpha+\gamma})\neq\dim (\mathfrak{g}_{\xi})$ and $\dim (\mathfrak{g}_{\gamma})\neq\dim (\mathfrak{g}_{\alpha+\beta+\gamma})$ follow
	\begin{align*}
	\mathfrak{g}_{\xi}^{[2]}&\subset \ker (\alpha)\cap \ker (\gamma),\\
	\mathfrak{g}_{\alpha+\beta}^{[2]}&\subset \ker (\alpha+\beta)\cap \ker (\gamma)\subset\ker(\alpha+\beta+\gamma).
\end{align*}
Analogously with $\xi=\beta,\gamma$. Thus, we conclude that for $\xi=\alpha,\beta,\gamma$,
	\[
	\mathfrak{g}_{\xi}^{[2]}\subset\ker (\alpha)\cap \ker(\beta)\cap \ker (\gamma)\subset \ker(\alpha+\beta+\gamma),
	\]
therefore, for any $\xi\in\Delta$, we obtain
\[
[\mathfrak{g}_{\xi},\mathfrak{g}_{\xi}]\subset \mathfrak{g}_{\xi}^{[2]}\subset\ker (\alpha+\beta+\gamma).
\]
Hence, $\mathfrak{g}$ is not simple, which is an absurd. Hence, all root spaces have the same dimension.

\end{proof}


\subsection{Simple Lie 2-algebras of toral rank 3}
\label{sec:simple}

	The main goal of this subsection is to study simple Lie $2$-algebras with a triangulable Cartan subalgebra, of toral rank 3 and suggest a way to classify them. We start with the non-existence of such algebras if there are less than 7 $\mathfrak{t}$-roots in the decomposition~\eqref{patron}, i.e. $\mathfrak{g}^{\Delta_i}$ is not simple for $1\leq i\leq 7$. 

	Recall that Proposition~\ref{wefef} guarantees that the dimension of simple Lie $2$-algebras of toral rank $3$ has a lower bound of $6$. The apex of this section is the proof of a necessary condition for simplicity of Lie $2$-algebras (see Theorem~\ref{Payarin}). As an immediate corollary to this latter theorem we shall increase the lower bound of the dimension to $16$. In order to achieve all this, we first ought to prove the following theorem which is also interesting in its own right.


\begin{theorem}
\label{payarin}
Let $1\leq i\leq 7$. There are not simple Lie $2$-algebras   $\mathfrak{g}^{\Delta_i}$ with a triangulable Cartan subalgebra.
\end{theorem}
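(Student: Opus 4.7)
My plan is to prove Theorem~\ref{payarin} by constructing, for each $i$ with $1\leq i\leq 7$, a proper nonzero ideal $\mathfrak{I}$ of $\mathfrak{g}^{\Delta_i}$, which violates simplicity. The fuel for the construction is the presence, in every such $\Delta_i$, of at least one missing $\mathfrak{t}$-root $\sigma\in(\mathfrak{t}^*\setminus\{0\})\setminus\Delta_i$; a missing root provides two independent pieces of information: on the one hand $[\mathfrak{g}_\xi,\mathfrak{g}_\eta]=0$ whenever $\xi+\eta=\sigma$, and on the other hand, since $\dim(\mathfrak{g}_\sigma)=0$, the contrapositive of Proposition~\ref{proposicion51} forces $\mathfrak{g}_\xi^{[2]}\subseteq\ker(\sigma)$ whenever $\xi+\sigma\in\Delta_i$.

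The general recipe is to select a proper subspace $J\subsetneq\mathfrak{t}$ that contains $[\mathfrak{g}_\xi,\mathfrak{g}_\xi]\cap\mathfrak{t}$ for every $\xi\in\Delta_i$ (combining Lemma~\ref{lem:tech} with the extra $\ker(\sigma)$ confinement coming from the missing roots) and then set $\mathfrak{I}:=J\oplus\mathfrak{n}\oplus\bigoplus_{\xi\in\Delta_i}\mathfrak{g}_\xi$. Closure of $\mathfrak{I}$ under the $2$-map is immediate since $J\subseteq\mathfrak{t}$ consists of toral elements, $\mathfrak{n}$ is $2$-nilpotent, and $\mathfrak{g}_\xi^{[2]}\subseteq J\oplus\mathfrak{n}$ by construction. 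Closure under the Lie bracket reduces, via the weight-space decomposition, to the observation that $[\mathfrak{g}_\xi,\mathfrak{g}_\eta]\subseteq\mathfrak{g}_{\xi+\eta}$ lands in $\mathfrak{I}$ when $\xi+\eta\in\Delta_i$ and vanishes otherwise, while $[\mathfrak{g}_\xi,J]\subseteq\mathfrak{g}_\xi$ and $[\mathfrak{n},\mathfrak{I}]\subseteq\mathfrak{I}$ hold trivially. Properness is witnessed by exhibiting a toral element not in $J$.

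I would dispatch the cases in order of decreasing number of missing roots. For $\Delta_1$ and $\Delta_3$, all non-trivial cross-brackets already vanish and Lemma~\ref{lem:tech} alone puts every $[\mathfrak{g}_\xi,\mathfrak{g}_\xi]$ into $\mathfrak{n}$ (for $\Delta_1$) or into $\vspan\{t_1+t_3,t_2+t_3\}\oplus\mathfrak{n}$ (for $\Delta_3$, via part (iii)). For $\Delta_2$, $\Delta_4$ and $\Delta_5$, one combines Lemma~\ref{lem:tech} with the $\ker(\sigma)$ restriction from the two missing roots $\sigma_1,\sigma_2$; when $\xi+\sigma_1,\xi+\sigma_2\in\Delta_i$ hold simultaneously, the self-bracket $[\mathfrak{g}_\xi,\mathfrak{g}_\xi]$ is forced into the $1$-dimensional intersection $\ker(\sigma_1)\cap\ker(\sigma_2)\cap\mathfrak{t}$, and an inspection of the remaining $\xi$ shows the collective span still sits in a proper subspace of $\mathfrak{t}$.

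The chief difficulty is the cases $\Delta_6$ and $\Delta_7$, where only a single root is missing: Lemma~\ref{lem:tech} alone gives self-bracket bounds whose union over $\xi$ is all of $\mathfrak{t}$, so the argument rests entirely on the $\ker(\sigma)$ mechanism. Here the structural miracle is that, for the unique missing root $\sigma$, the map $\xi\mapsto\xi+\sigma$ is an involution of $\mathfrak{t}^*\setminus\{0\}$ that swaps $0$ with $\sigma$ and therefore restricts to an involution of $\Delta_i$; consequently $\xi+\sigma\in\Delta_i$ for every $\xi\in\Delta_i$, and $\mathfrak{g}_\xi^{[2]}\subseteq\ker(\sigma)$ uniformly, giving a proper $2$-dimensional $J=\ker(\sigma)\cap\mathfrak{t}$. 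This reproduces the ideals from Lemma~\ref{betagamma>alphabetagamma} (for $\Delta_6$ with $\sigma=\alpha+\beta+\gamma$) and Lemma~\ref{lem:alphagamma>betagamma} (for $\Delta_7$ with $\sigma=\beta+\gamma$); their hypotheses $\dim(\mathfrak{g}_{\beta+\gamma})>\dim(\mathfrak{g}_{\alpha+\beta+\gamma})$ and $\dim(\mathfrak{g}_{\alpha+\gamma})>\dim(\mathfrak{g}_{\beta+\gamma})$ are automatic since the right-hand side is $0$. The main bookkeeping obstacle I foresee is verifying that the proofs of those lemmas, which were written for $\Delta=\Delta_0$, continue to hold when one of the root spaces simply fails to appear; I expect this to go through because those proofs only invoke the strict dimension inequality $\dim(\mathfrak{g}_\sigma)<\dim(\mathfrak{g}_{\xi+\sigma})$, which is exactly what the missing-root hypothesis supplies.
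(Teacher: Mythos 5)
Your proposal is correct and, at its computational core, is the same argument as the paper's: both exploit the missing roots to confine the toral component of every self-bracket $[\mathfrak{g}_{\xi},\mathfrak{g}_{\xi}]$ to a fixed proper subspace $J\subsetneq\mathfrak{t}$, and your case-by-case bookkeeping (including the involution observation for $\Delta_6$ and $\Delta_7$) matches the paper's Cases 1 through 4.b. The differences are only in packaging. First, you obtain the confinement from the contrapositive of Proposition~\ref{proposicion51} applied with $\dim(\mathfrak{g}_{\sigma})=0$ for a missing root $\sigma$ --- strictly speaking that proposition is stated for $\sigma\in\Delta$, but its proof applies verbatim when $\mathfrak{g}_{\sigma}=0$, and in any case the identical constraint follows from the Jacobi-identity computation inside the proof of Lemma~\ref{lem:tech} with the auxiliary root $\rho=\xi+\sigma\in\Delta_i$ (since $\xi(t)=0$ gives $\sigma(t)=\rho(t)$), which is exactly what the paper uses. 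Second, you conclude by exhibiting the proper nonzero ideal $J\oplus\mathfrak{n}\oplus\bigoplus_{\xi\in\Delta_i}\mathfrak{g}_{\xi}$, whereas the paper notes that simplicity forces $\mathfrak{h}=\sum_{\xi\in\Delta_i}[\mathfrak{g}_{\xi},\mathfrak{g}_{\xi}]$ and then contradicts $\dim(\mathfrak{t})=3$; these are the same contradiction read from opposite ends. Finally, the ``bookkeeping obstacle'' you flag at the end is moot: you never need to re-verify Lemmas~\ref{betagamma>alphabetagamma} and~\ref{lem:alphagamma>betagamma} in the degenerate setting, because your direct argument (for the unique missing root $\sigma$ one has $\xi+\sigma\in\Delta_i$ for every $\xi\in\Delta_i$, hence $\mathfrak{g}_{\xi}^{[2]}\subseteq\ker(\sigma)$ uniformly and $J=\ker(\sigma)\cap\mathfrak{t}$ works) already produces the ideal without them.
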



\begin{proof}
Suppose for a contradiction that $\mathfrak{g}=\mathfrak{g}^{\Delta_i}$ is a simple Lie $2$-algebra of toral rank $3$. By simplicity of $\mathfrak{g}$, we have $\mathfrak{h}=\sum_{\xi\in\Delta_i}[\mathfrak{g}_\xi,\mathfrak{g}_\xi]$. Consider $\rho,\omega, \sigma$ different $\mathfrak{t}$-roots in $\{\alpha,\beta,\gamma\}$. In the following cases we use the notation and the  inclusions from Lemma~\ref{lem:tech} without making any explicit mention.

\textbf{Case 1:} Since $[\mathfrak{g}_{\xi},\mathfrak{g}_{\xi}]\subseteq\mathfrak{n}$ for all $\xi\in\Delta_1$, we have $\mathfrak{t}=0$. 

\textbf{Case 2.a:} A straightforward computation shows that $\mathfrak{t}\subseteq \vspan\left\{t_{\alpha},t_{\beta}\right\}$. 

\textbf{Case 2.b:} In this case we get $[\mathfrak{g}_{\xi},\mathfrak{g}_{\xi}]\subseteq\mathfrak{n}$ for all $\xi\in\Delta_3\backslash\left\{\alpha+\beta+\gamma\right\}$ and $[\mathfrak{g}_{\alpha+\beta+\gamma},\mathfrak{g}_{\alpha+\beta+\gamma}]\subseteq\mathfrak{n}\oplus\vspan\{t_{\alpha}+t_{\gamma},t_{\beta}+t_{\gamma}\}$. Since $\alpha,\beta\in\Delta_3$ and 
	$$
	\beta+\gamma=\alpha+(\alpha+\beta+\gamma),\ \ \alpha+\gamma=\beta+(\alpha+\beta+\gamma)\not\in\Delta_3
	$$ 
we conclude that $\alpha=\beta=0$ over $[\mathfrak{g}_{\alpha+\beta+\gamma},\mathfrak{g}_{\alpha+\beta+\gamma}]$. Hence, $\mathfrak{t}=0$.

\textbf{Case 3.a:} Note that $[\mathfrak{g}_{\alpha},\mathfrak{g}_{\alpha}]\subseteq \mathfrak{n}\oplus\vspan\{t_{\beta},t_{\gamma}\}$. For any $\omega\neq\alpha$ it follows that $[\mathfrak{g}_{\omega},\mathfrak{g}_{\omega}]\subseteq\mathfrak{n}\oplus\vspan\{t_{\alpha}\}$ and $[\mathfrak{g}_{\alpha+\omega},\mathfrak{g}_{\alpha+\omega}]\subseteq\mathfrak{n}\oplus\vspan\{t_{\alpha}+t_{\omega}\}$. However, $\alpha=0$ over $[\mathfrak{g}_{\omega},\mathfrak{g}_{\omega}]$ and $[\mathfrak{g}_{\alpha+\omega},\mathfrak{g}_{\alpha+\omega}]$, because $\alpha+\rho\in\Delta_4$ and 
	$$
	\alpha+\beta+\gamma=\omega+(\alpha+\rho),\ \ \rho+\omega=\alpha+\omega+(\alpha+\rho)\not\in\Delta_4.
	$$ 
This implies that $\mathfrak{t}\subseteq \vspan\{t_{\beta},t_{\gamma}\}$.

\textbf{Case 3.b:} For $\omega,\rho\in\{\alpha,\beta\}$ we have $[\mathfrak{g}_{\omega},\mathfrak{g}_{\omega}]\subseteq\mathfrak{n}$, because $[\mathfrak{g}_{\omega},\mathfrak{g}_{\omega}]\subseteq\mathfrak{n}\oplus\vspan\{t_{\rho}\}$, $\alpha+\beta+\gamma\in\Delta_5$ and $\rho+\gamma\not\in\Delta_5$. In addition, $[\mathfrak{g}_{\alpha+\beta},\mathfrak{g}_{\alpha+\beta}]\subseteq\mathfrak{n}\oplus\vspan\{t_{\alpha}+t_{\beta},t_{\gamma}\}$ and $[\mathfrak{g}_{\alpha+\beta+\gamma},\mathfrak{g}_{\alpha+\beta+\gamma}]\subseteq\mathfrak{n}\oplus\vspan\{t_{\alpha}+t_{\gamma},t_{\beta}+t_{\gamma}\}$. However, $\alpha,\beta\in\Delta_5$ but $\beta+\gamma,\alpha+\gamma\not\in\Delta_5$, which implies that $[\mathfrak{g}_{\alpha+\beta+\gamma},\mathfrak{g}_{\alpha+\beta+\gamma}]\subseteq\mathfrak{n}$. We conclude that $\mathfrak{t}\subseteq \vspan\{t_{\alpha}+t_{\beta},t_{\gamma}\}$.

\textbf{Case 4.a:} Note that $[\mathfrak{g}_{\omega},\mathfrak{g}_{\omega}]\subseteq\mathfrak{n}\oplus\vspan\{t_{\rho},t_{\sigma}\}$. However, $\rho+\sigma\in\Delta_6$ but $\alpha+\beta+\gamma\not\in\Delta_6$. This means, similarly to the previous cases, that $[\mathfrak{g}_{\omega},\mathfrak{g}_{\omega}]\subseteq\mathfrak{n}\oplus\vspan\{t_{\rho}+t_{\sigma}\}$, since $\rho+\sigma=0$ over $[\mathfrak{g}_{\omega},\mathfrak{g}_{\omega}]$. In addition, $[\mathfrak{g}_{\omega+\rho},\mathfrak{g}_{\omega+\rho}]\subseteq\mathfrak{n}\oplus\vspan\{t_{\omega}+t_{\rho}\}$. Hence, $\mathfrak{t}\subseteq \vspan\{t_{\alpha}+t_{\beta},t_{\alpha}+t_{\gamma},t_{\beta}+t_{\gamma}\}=\vspan\{t_{\alpha}+t_{\gamma},t_{\beta}+t_{\gamma}\}$.

\textbf{Case 4.b:} In the same manner as in the previous cases, we see that $[\mathfrak{g}_{\alpha},\mathfrak{g}_{\alpha}]\subseteq\mathfrak{n}\oplus\vspan\{t_{\beta}+t_{\gamma}\}$, because $\alpha+\beta+\gamma\in\Delta_7$ and $\beta+\gamma\not\in\Delta_7$. For $\omega\neq\alpha$, $[\mathfrak{g}_{\omega},\mathfrak{g}_{\omega}]\subseteq\mathfrak{n}\oplus\vspan\{t_{\alpha}\}$ and $[\mathfrak{g}_{\alpha+\omega},\mathfrak{g}_{\alpha+\omega}]\subseteq\mathfrak{n}\oplus\vspan\{t_{\alpha}+t_{\beta}+t_{\gamma}\}$, because $\alpha+\rho\in\Delta_7$ and $\omega+\rho\not\in\Delta_7$ for $\rho\not\in\{\alpha,\omega\}$. Besides, $[\g_{\alpha+\beta+\gamma},\g_{\alpha+\beta+\gamma}]\subseteq\mathfrak{n}\oplus\vspan\{t_{\beta}+t_{\gamma}\}$, since $\alpha\in\Delta_7$ and $\beta+\gamma\not\in\Delta_7$. We conclude that $\mathfrak{t}\subseteq \vspan\{t_{\alpha},t_{\beta}+t_{\gamma},t_{\alpha}+t_{\beta}+t_{\gamma}\}=\vspan\{t_{\alpha},t_{\beta}+t_{\gamma}\}$.

\vspace{.2cm}

In any case, we have $\mathfrak{t}\subseteq\mathfrak{t}_0$ for some subspace $\mathfrak{t}_0$ of dimension 2. This contradicts the hypothesis of the toral rank of $\mathfrak{g}$.

\end{proof}


The next theorem is the most important result of this paper and gives a necessary condition for a Lie 2-algebra  to be simple:


\begin{theorem}
\label{Payarin}
Let $\mathfrak{g}$ be a simple Lie $2$-algebra with a triangulable Cartan subalgebra, of toral rank $3$. Then, $\dim(\mathfrak{g}_{\xi})=\dim(\mathfrak{g}_{\sigma})$ for any non-zero $\xi,\sigma\in\mathfrak{t}^*$.
\end{theorem}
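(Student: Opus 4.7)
The plan is to combine Theorem \ref{payarin} with the entire sequence of lemmas developed in Section \ref{sec:centerless}: each lemma produces a proper nonzero ideal whenever a specific pair of root spaces have different dimensions, so simplicity forces every such inequality to be an equality.

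First, because $\mathfrak{g}$ is simple and $\dim(\mathfrak{g})\neq 1$, its center is a proper ideal and therefore vanishes, so $\mathfrak{g}$ is centerless. By Theorem \ref{payarin}, no simple Lie $2$-algebra of toral rank three occurs as $\mathfrak{g}^{\Delta_i}$ with $1\leq i\leq 7$, so we must have $\Delta=\Delta_0=\mathfrak{t}^{*}\setminus\{0\}$; in particular, all seven root spaces are nonzero. Adopting the dimension ordering fixed in \eqref{eq:conv}, I would first apply Lemma \ref{lem:alpha>beta} to rule out $\dim(\mathfrak{g}_{\alpha})>\dim(\mathfrak{g}_{\beta})$: the ideal it produces omits $t_1$, hence is proper, while containing $\mathfrak{g}_{\alpha}\neq 0$, contradicting simplicity. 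Similarly Lemma \ref{lem:beta>xi} forbids $\dim(\mathfrak{g}_{\beta})$ from being strictly greater than every $\dim(\mathfrak{g}_{\xi})$ with $\xi\notin\{\alpha,\beta\}$. This is exactly what sets up the Case A / Case B dichotomy of Section \ref{sec:centerless}.

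In Case A (where $\dim(\mathfrak{g}_{\beta})=\dim(\mathfrak{g}_{\alpha+\beta})\geq\dim(\mathfrak{g}_{\gamma})\geq\dim(\mathfrak{g}_{\alpha+\gamma})\geq\dim(\mathfrak{g}_{\beta+\gamma})\geq\dim(\mathfrak{g}_{\alpha+\beta+\gamma})$), I would walk down the chain of inequalities invoking Lemmas \ref{lem:alphabeta>xi}, \ref{lem:alphabetagammaIgual}, and \ref{betagamma>alphabetagamma} in turn to collapse each strict inequality, concluding that all seven dimensions coincide. In Case B, Lemma \ref{lem:gamma>xi} forces $\dim(\mathfrak{g}_{\gamma})$ to agree with at least one dimension among $\{\mathfrak{g}_{\alpha+\beta},\mathfrak{g}_{\alpha+\gamma},\mathfrak{g}_{\beta+\gamma},\mathfrak{g}_{\alpha+\beta+\gamma}\}$; after an appropriate change of basis of $\mathfrak{t}^{*}$, either we fall back into Case A (already settled) or we land in the configuration $\dim(\mathfrak{g}_{\gamma})=\dim(\mathfrak{g}_{\alpha+\beta+\gamma})\geq\dim(\mathfrak{g}_{\alpha+\beta})\geq\dim(\mathfrak{g}_{\alpha+\gamma})\geq\dim(\mathfrak{g}_{\beta+\gamma})$, where Lemmas \ref{lem:alphabetagamma>alphabeta} and \ref{lem:alphagamma>betagamma} collapse the remaining strict inequalities.

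The main obstacle is not any single computation but the bookkeeping: at every application of a lemma one must check that the extracted ideal is genuinely proper (some toral vector or some root space escapes it) as well as nonzero, and that the normalization imposed by \eqref{eq:conv} is compatible with the hypothesis of the next lemma being invoked, so that a permutation of the simple roots $\alpha,\beta,\gamma$ may be needed before each step. Once this is tracked carefully, the exhaustiveness of the lemmas of Section \ref{sec:centerless} across every configuration in which some $\mathfrak{t}$-root spaces have distinct dimensions delivers the theorem.
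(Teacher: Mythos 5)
Your proposal follows exactly the paper's argument: Theorem~\ref{payarin} forces $\Delta=\Delta_0$, and then Lemmas~\ref{lem:alpha>beta}--\ref{lem:alphagamma>betagamma} rule out every strict inequality among the dimensions of the root spaces, since each would yield a nonzero proper ideal. The paper states this in two sentences; your version merely makes explicit the bookkeeping (centerlessness, properness of each ideal, the Case A/Case B dichotomy) that the authors leave implicit.
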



\begin{proof}
Theorem~\ref{payarin} guarantees $\Delta=\Delta_0$. Furthermore, Colorally~\ref{Simple3} implies that all dimension of the $\mathfrak{t}$-root spaces are equals.

\end{proof}

 
As an immediate consequence of the latter theorem and Lemma \ref{lem:dim=1} we have the following:


\begin{corollary}
There are not simple Lie $2$-algebras $\mathfrak{g}$ with a triangulable Cartan subalgebra, of toral rank $3$ with $\dim(\g)\leq 16$.
\end{corollary}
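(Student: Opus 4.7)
The plan is to combine Theorem~\ref{payarin}, Theorem~\ref{Payarin} and Lemma~\ref{lem:dim=1} to bound $\dim(\mathfrak{g})$ from below. Suppose $\mathfrak{g}$ is a simple Lie $2$-algebra with triangulable Cartan subalgebra and $MT(\mathfrak{g})=3$. Since simplicity forces $\mathfrak{g}$ to be centerless, all the results proved in Sections~\ref{sec:MT} and~\ref{sec:centerless} apply to $\mathfrak{g}$.

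First I would invoke Theorem~\ref{payarin} to rule out all root systems $\Delta_1,\dots,\Delta_7$, forcing $\Delta=\Delta_0$, i.e.\ all seven $\mathfrak{t}$-roots appear in the decomposition~\eqref{patron}. Next, Theorem~\ref{Payarin} tells me that there exists a common value $d\geq 1$ with $\dim(\mathfrak{g}_\xi)=d$ for every $\xi\in\Delta_0$. Using the root space decomposition together with $\dim(\mathfrak{h})\geq\dim(\mathfrak{t})=3$, I obtain the estimate
\[
\dim(\mathfrak{g})\;=\;\dim(\mathfrak{h})+\sum_{\xi\in\Delta_0}\dim(\mathfrak{g}_\xi)\;\geq\;3+7d.
\]

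The key remaining step is to exclude $d=1$. If $d=1$, then Lemma~\ref{lem:dim=1} produces the subspace
\[
\mathfrak{I}\;=\;\mathfrak{n}\oplus\bigoplus_{\xi\in\Delta_0}\mathfrak{g}_\xi
\]
as an ideal of $\mathfrak{g}$. This ideal is nonzero (each $\mathfrak{g}_\xi\neq 0$) and proper, since it misses the nonzero torus $\mathfrak{t}$. Simplicity of $\mathfrak{g}$ is thereby contradicted, so $d\geq 2$. Plugging this back yields $\dim(\mathfrak{g})\geq 3+7\cdot 2=17>16$, which is the desired conclusion.

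The main (and only nontrivial) obstacle here is actually already surmounted by the two theorems just proved; once $\Delta=\Delta_0$ and all root spaces have equal dimension, the counting argument is essentially automatic. The one subtlety worth double-checking is that a simple Lie $2$-algebra is indeed centerless, so that the hypotheses of Lemma~\ref{lem:dim=1} are met—this is immediate because the center of a Lie $2$-algebra is always an ideal and $\dim(\mathfrak{g})\neq 1$ by definition of simplicity.
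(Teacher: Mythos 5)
Your proof is correct and follows the paper's own route exactly: the paper derives the corollary as an immediate consequence of Theorem~\ref{Payarin} (all seven root spaces share a common dimension $d$) together with Lemma~\ref{lem:dim=1} (which rules out $d=1$ by producing a proper nonzero ideal), yielding $\dim(\mathfrak{g})\geq 3+7\cdot 2=17$. Your extra remarks---that simplicity forces $\mathfrak{g}$ to be centerless and that the ideal of Lemma~\ref{lem:dim=1} is nonzero and misses $\mathfrak{t}$---are exactly the details the paper leaves implicit.
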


\medskip

\textbf{Acknowledgements}. 
The second author thanks to the Professor A. Grishkov for his valuable guidance. It is extended to the Universidade Federal do Amazonas for its hospitality, as well as the support of Universidad Tecnológica de Bolívar during his postdoctoral stay. This research was partially supported by the Coordena\c{c}\~{a}o de Aperfei\c coamento de Pessoal de N\'ivel Superior -- Brasil (CAPES) --  Finance Code 001.


\bibliographystyle{plain}
\bibliography{biblio}

\begin{thebibliography}{AB99}




\bibitem{BL1} BLOCK, Richard E. The classification problem for simple Lie algebras of characteristic p. \textit{Lie Algebras and Related Topics}. Springer, Berlin, Heidelberg, 1982. p. 38-56.
\bibitem{BL2} BLOCK, Richard E.; WILSON, Robert Lee. The restricted simple Lie algebras are of classical or Cartan type. \textit{Proceedings of the National Academy of Sciences}, 1984, vol. 81, no 16, p. 5271-5274.
\bibitem{BL3} BLOCK, Richard E.; WILSON, Robert Lee. Classification of the restricted simple Lie algebras. \textit{Journal of Algebra, 1988}, vol. 114, no 1, p. 115-259.

\bibitem{gr21345} GRISHKOV, Alexander. On simple Lie algebras over a field of characteristic 2. \textit{Journal of Algebra}, 2012, vol. 363, p. 14-18.
\bibitem{gr} GRISHKOV, Alexander; PREMET, Alexander.  Simple Lie algebras of absolute toral rank 2 in characteristic 2, \textit{preprint}.

\bibitem[Jac62]{ja} N. Jacobson, {\it Lie algebras}, Interscience Tracts in Pure and Applied Mathematics, No. 10, Interscience Publishers (a division of John Wiley \& Sons), New York, 1962. MR0143793


\bibitem[SF88]{SF88} STRADE, H., FARNSTEINER, R., \textit{Modular Lie Algebra and Their Representations}, Marcel Dekker, New York, 1988.

\bibitem{Payares} PAYARES, Carlos; ARIAS, Fabian.   Classical simple Lie 2-Algebras of toral rank 3 and a contragredient Lie 2-algebra of toral rank 4.  \textit{arXiv preprint arXiv:1903.00060} (2019).
\bibitem{PS1} PREMET, Alexander; STRADE, Helmut. Simple Lie algebras of small characteristic: I. Sandwich elements. \textit{Journal of Algebra}, 1997, vol. 189, no 2, p. 419-480.
\bibitem{PS2} PREMET, Alexander; STRADE, Helmut. Simple Lie algebras of small characteristic II. Exceptional roots. \textit{Journal of Algebra}, 1999, vol. 216, no 1, p. 190-301.
\bibitem{PS3} PREMET, Alexander; STRADE, Helmut. Simple Lie algebras of small characteristic: III. The toral rank 2 case. \textit{Journal of Algebra}, 2001, vol. 242, no 1, p. 236-337.
\bibitem{PS4} PREMET, Alexander; STRADE, Helmut. Simple Lie algebras of small characteristic IV.: Solvable and classical roots. \textit{Journal of Algebra}, 2004, vol. 278, no 2, p. 766-833.
\bibitem{PS5}PREMET, Alexander; STRADE, Helmut. Simple Lie algebras of small characteristic V. The non-Melikian case. \textit{Journal of Algebra}, 2007, vol. 314, no 2, p. 664-692.
\bibitem{PS6} PREMET, Alexander; STRADE, Helmut. Simple Lie algebras of small characteristic VI. Completion of the classification. \textit{Journal of Algebra}, 2008, vol. 320, no 9, p. 3559-3604.

\bibitem[PS06]{PS06} A. Premet\ and\ H. Strade, Classification of finite dimensional simple Lie algebras in prime characteristics, in {\it Representations of algebraic groups, quantum groups, and Lie algebras}, 185--214, Contemp. Math., 413, Amer. Math. Soc., Providence, RI. MR2263096

\bibitem[Skr98]{Skr98} SKRYABIN, Serge. Toral rank one simple Lie algebras of low characteristics. \textit{Journal of Algebra}, 1998, vol. 200, no 2, p. 650-700.
\bibitem{ST1} STRADE, Helmut; WILSON, Robert Lee. Classification of simple Lie algebras over algebraically closed fields of prime characteristic. \textit{Bulletin of the American Mathematical Society}, 1991, vol. 24, no 2, p. 357-362.

\bibitem[Str89]{Str89} STRADE, Helmut. The absolute toral rank of a Lie algebra. \textit{Lie algebras}, Madison 1987. Springer, Berlin, Heidelberg, 1989. p. 1-28.

\bibitem{ST3} STRADE, Helmut. The classification of the simple modular Lie algebras: I. Determination of the two-sections. 
\textit{Annals of Mathematics}, 1989, vol. 130, no 3, p. 643-677.

\bibitem{ST4} STRADE, Helmut. The classification of the simple modular Lie algebras II. The toral structure. \textit{Journal of Algebra}, 1992, vol. 151, no 2, p. 425-475.

\bibitem{ST5} STRADE, Helmut. The classification of the simple modular Lie algebras: III. Solution of the classical case. \textit{Annals of Mathematics}, 1991, p. 577-604.

\bibitem{ST6} STRADE, Helmut. The classification of the simple modular Lie algebras: VI. Solving the final case. \textit{Transactions of the American Mathematical Society}, 1998, vol. 350, no 7, p. 2553-2628.

\bibitem[Str07]{Str07} H. Strade, Lie algebras of small dimension, in {\it Lie algebras, vertex operator algebras and their applications}, 233--265, Contemp. Math., 442, Amer. Math. Soc., Providence, RI. MR2372566

\bibitem{Wi} WILSON, Robert Lee. Classification of the restricted simple Lie algebras with toral Cartan subalgebras. \textit{Journal of Algebra}, 1983, vol. 83, no 2, p. 531-570.

\end{thebibliography}

\end{document}